\newtheorem{theorem}{Theorem}
\newtheorem{lemma}[theorem]{Lemma}
\newtheorem{remark}[theorem]{Remark}
\newtheorem{definition}[theorem]{Definition}
\newtheorem{example}[theorem]{Example}
\DeclareMathOperator{\ctg}{ctg}
\DeclareMathOperator{\col}{col}
\DeclareMathOperator{\diag}{diag}
\DeclareMathOperator{\codiag}{codiag}
\DeclareMathOperator{\dom}{dom}
\DeclareMathOperator{\Span}{span}
\DeclareMathOperator{\Vand}{Vand}
\renewcommand{\Im}{{\rm Im}}
\renewcommand{\Re}{{\rm Re}}
\newcommand{\wt}{\widetilde}
\newcommand{\eps}{\varepsilon}
\def\cH{\mathcal{H}}
\def\cP{\mathcal{P}}
\def\fH{\mathfrak{H}}
\def\fN{\mathfrak{N}}
\def\bC{\mathbb{C}}
\def\bR{\mathbb{R}}
\begin{document}
\sloppy

\title[Spectral function of ordinary differential operator]
	{Spectral functions of the simplest even order \\ ordinary
differential operator}

\author{Anton A. Lunyov}
\address{Institute of Applied Mathematics and Mechanics, NAS of Ukraine,
R. Luxemburg str. 74, \, 83114 Donetsk, Ukraine}
\curraddr{}
\email{A.A.Lunyov@gmail.com}
\thanks{The author expresses his gratitude to Prof.~M.~Malamud
for posing the problem and permanent attention to the work.}

\subjclass[2000]{47E05; 34B40}
\date{}
\keywords{Friedrichs and Krein extensions, spectral function,
boundary triplet, Weyl function, Vandermond determinant}

\begin{abstract}
We consider the minimal differential operator $A$ generated in
$L^2(0,\infty)$ by the differential expression $l(y) = (-1)^n
y^{(2n)}$. Using the technique of boundary triplets and the
corresponding Weyl functions, we find explicit form of the
characteristic matrix and the corresponding spectral function
for the Friedrichs and Krein extensions of the operator $A$.
\end{abstract}

\maketitle

\section{Introduction}
%
%
Let $\cP$ be the minimal symmetric operator, generated in
$L^2(0,\infty)$ by a differential expression
\begin{equation} \label{eq:l(y).gen}
    \sum_{k=0}^n (-1)^k \left(p_{n-k}(x) y^{(k)}\right)^{(k)},
\end{equation}
%
%
Assume that its deficiency indices are $n_{\pm}(\cP) = n$. It is
well-known~\cite[Theorem VI.21.2]{Nai69},~\cite[Theorem
II.9.1]{LevSar88} that any its proper self-adjoint extension
$\wt{\cP}$ is unitary equivalent to the multiplication operator
$\Lambda_{\sigma}$ in the space $L^2_{\sigma}(\bR)$, where
$\Lambda_{\sigma} : f(x) \to x f(x)$, $f \in L^2_\sigma(\bR)$,
and $\sigma(\cdot)$ is non-decreasing left-continuous
self-adjoint $n \times n$ matrix-function. The matrix-function
$\sigma(\cdot)$ is called a spectral function of the operator
$\wt{\cP}$ and coincides with the spectral function of the
characteristic matrix of $\wt{\cP}$, which, in turn, can be
found by the Green function of the operator $\wt{\cP}$
(see~\cite[VI.21.4]{Nai69}).

The purpose of this paper is to find the explicit form of the
spectral function for the Friedrichs extension (so-called "hard"
extension) $A_F$ and for the Krein extension $A_K$
(see~\cite[\S109]{AkhGla81} for precise definitions) of the
minimal symmetric operator $A$ generated in $L^2(0,\infty)$ by
the differential expression
\begin{equation} \label{eq:l(y)0}
    l(y) := (-1)^n y^{(2n)}(\cdot).
\end{equation}

Explicit form of the spectral function of some selfadjoint
extension $\wt{A}$ of $A$ plays important role when general
selfadjoint differential operator is treated as a perturbation
of $\wt{A}$. It is well-known that the Friedrichs extension
$A_F$ of the operator $A$ is defined by the boundary conditions
$y(0) = y'(0) = \ldots = y^{(n-1)} = 0$ and we show that the
Krein extension $A_K$ is defined by the boundary conditions
$y^{(n)}(0) = \ldots = y^{(2n-1)} = 0$.

We will exploit the technique of boundary triplets and the
corresponding Weyl functions (see Section~\ref{sec:prelim} for
precise definitions) to find the spectral function. This new
approach to extension theory of symmetric operators has been
appeared and elaborated during the last three decades
(see~\cite{Gorb91,DerMal91,DerMal92} and references therein). It
is well-known~\cite{DerMal92} that the characteristic matrix of
the selfadjoint extension $\wt{A}$ of $A$ coincides with the
Weyl function of the corresponding boundary triplet. This allows
us to find the characteristic matrix and its spectral function
easier than by classical method.

Let us formulate the main results of the paper.
%
%
\begin{theorem} \label{th:Fried}
The characteristic matrix (the Weyl function) of the Friedrichs
extension $A_F$ of the operator $A$ is given by
\begin{equation} \label{eq:MF(l)}
    M_F(\lambda) = \left(  \frac{-C_j \cdot C_k}{\sin((j+k+1)\alpha)}
    \cdot \left(\sqrt[2n]{-\lambda}\right)^{j+k+1} \right)_{j,k=0}^{n-1}, \quad \Im \lambda > 0.\\
\end{equation}
where
\begin{equation} \label{eq:C0.Cj}
    C_0 := 1, \qquad C_k := \prod_{p=1}^{k} \ctg (p\alpha),
    \quad \alpha = \frac{\pi}{2n}, \quad k \in \{1,\ldots,n-1\},
\end{equation}
and
\begin{equation} \label{eq:sqrt(-l)}
    \sqrt[2n]{-\lambda} := \sqrt[2n]{r} \cdot e^{\frac{i (\varphi - \pi)}{2n}},
    \qquad \lambda = r e^{i \varphi}, \quad 0 < \varphi < \pi.
\end{equation}
The corresponding spectral function is
\begin{eqnarray}
    \label{eq:sigma.F(t>0)}
        \sigma_F(t) &=& \frac{2n}{\pi}\left(  \frac{C_j \cdot C_k}{2n+1+j+k}
        \cdot t^{\frac{2n+1+j+k}{2n}}\right)_{j,k=0}^{n-1}, \quad t \geqslant 0, \\
    \label{eq:sigma.F(t<0)}
        \sigma_F(t) &=& 0, \quad t < 0.
\end{eqnarray}
\end{theorem}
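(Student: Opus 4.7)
\bigskip

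\noindent\textbf{Proof proposal.}
My plan is to obtain $M_F(\lambda)$ as the Weyl function of an appropriate boundary triplet for $A^*$, and then recover $\sigma_F$ by Stieltjes inversion. For the expression $l(y)=(-1)^n y^{(2n)}$, the natural ordinary boundary triplet is $\Pi = \{\bC^n,\Gamma_0,\Gamma_1\}$ with
$$
\Gamma_0 y = \bigl(y(0),y'(0),\ldots,y^{(n-1)}(0)\bigr)^{\!\top},
\qquad
\Gamma_1 y = \bigl(\eps_0\, y^{(2n-1)}(0),\ldots,\eps_{n-1}\, y^{(n)}(0)\bigr)^{\!\top},
$$
with signs $\eps_j=\pm 1$ determined by Green's formula, so that $\ker\Gamma_0$ gives the boundary conditions $y(0)=\cdots=y^{(n-1)}(0)=0$ of the Friedrichs extension. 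Then by~\cite{DerMal92} the Weyl function $M(\lambda)$ of $\Pi$ coincides with the characteristic matrix of $A_F=A^*\!\upharpoonright\ker\Gamma_0$, so I only need to read off $M_F(\lambda)$ from the defect subspace $\fN_\lambda$.

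Next I would describe $\fN_\lambda$. Solutions of $l(y)=\lambda y$ satisfy $z^{2n}=(-1)^n\lambda$; with $\mu:=\sqrt[2n]{-\lambda}$ as in~\eqref{eq:sqrt(-l)} the roots are $z_k=\mu\,\omega_k$ where $\omega_k$ runs through appropriate $2n$-th roots of unity, and exactly $n$ of them satisfy $\Re z_k<0$ for $\Im\lambda>0$. Every $y\in\fN_\lambda\cap L^2(0,\infty)$ is then $y(x)=\sum_{k=0}^{n-1} c_k e^{\mu\omega_k x}$, so
$$
\Gamma_0 y = V\,\diag(\mu^{j})_{j=0}^{n-1}\, c,
\qquad
\Gamma_1 y = \widetilde V\,\diag(\mu^{j})_{j=n}^{2n-1}\, c,
$$
with $V$ and $\widetilde V$ Vandermonde-type matrices in the chosen roots $\omega_k$. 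Consequently $M_F(\lambda) = \widetilde V\,\diag(\mu^{j})_{j=n}^{2n-1}\,\bigl(V\diag(\mu^{j})_{j=0}^{n-1}\bigr)^{-1}$, and the Hankel-type dependence on $j+k$ in~\eqref{eq:MF(l)} is expected because the product pulls the factors $\mu^{j}$ and $\mu^{k+n}$ out of the Vandermondes, giving the announced $\mu^{j+k+1}$. The main obstacle is the explicit evaluation of this product: inverting $V$ produces sums of the form $\sum_k \omega_k^{a}/\prod_{l\ne k}(\omega_k-\omega_l)$, and one has to show that the resulting rational trigonometric sums collapse into the closed form $-C_j C_k/\sin((j+k+1)\alpha)$. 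I would attack this via the partial-fraction identity for $z^n/\prod(z-\omega_l)$ together with the telescoping products $\prod_{p=1}^{k}\ctg(p\alpha)$, which naturally arise from ratios $\sin((p+1)\alpha)/\sin(p\alpha)$ after combining conjugate pairs of roots $\omega_k,\bar\omega_k$.

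With $M_F(\lambda)$ in hand, the spectral function follows by Stieltjes--Perron inversion
$$
\sigma_F(t)-\sigma_F(s)=\lim_{\eps\to 0^+}\frac1\pi\int_s^t \Im M_F(\xi+i\eps)\,d\xi .
$$
For $\lambda=t<0$ the formula~\eqref{eq:sqrt(-l)} gives a real $\sqrt[2n]{-\lambda}$, so $M_F$ extends analytically and real-valued across $(-\infty,0)$, forcing $\Im M_F(\xi+i0)=0$ and hence $\sigma_F\equiv 0$ on $(-\infty,0)$, which is~\eqref{eq:sigma.F(t<0)}. For $\lambda=t+i0$ with $t>0$ one has $\varphi\to 0^+$, so $\sqrt[2n]{-\lambda}\to t^{1/(2n)}e^{-i\alpha}$; substituting into~\eqref{eq:MF(l)} yields
$$
\Im\bigl(M_F(t+i0)\bigr)_{jk} \;=\; \frac{-C_jC_k}{\sin((j+k+1)\alpha)}\cdot t^{(j+k+1)/(2n)}\cdot\bigl(-\sin((j+k+1)\alpha)\bigr)\;=\;C_jC_k\, t^{(j+k+1)/(2n)} ,
$$
and integrating in $t$ gives exactly~\eqref{eq:sigma.F(t>0)}. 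Thus the only genuinely hard step is the Vandermonde/roots-of-unity identity producing~\eqref{eq:MF(l)}; everything after that is a direct computation.
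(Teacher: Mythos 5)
Your skeleton is the same as the paper's: the boundary triplet $\{\bC^n,\Gamma_0,\Gamma_1\}$ built from $y(0),\dots,y^{(n-1)}(0)$ and the signed higher derivatives, the defect subspace spanned by the $n$ decaying exponentials $e^{\mu\omega_k x}$, the factorization $M_F=N_1N_0^{-1}$ with Vandermonde factors, and Stieltjes inversion at the end. The inversion half of your argument is complete and correct (the real-valuedness of $M_F$ on $(-\infty,0)$, the boundary value $\sqrt[2n]{-\lambda}\to t^{1/(2n)}e^{-i\alpha}$, and the integration giving the exponent $\frac{2n+1+j+k}{2n}$ all match \eqref{eq:sigma.F(t>0)}--\eqref{eq:sigma.F(t<0)}). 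The genuine gap is the step you yourself flag as ``the only genuinely hard step'': the identity
\begin{equation*}
  \sum_{p=0}^{n-1}\omega_p^{n+j}\,\wt v_{pk} \;=\; \frac{(-1)^k\eps^{(j+k+1)(n-1)}}{\sin((j+k+1)\alpha)}\,C_jC_k
\end{equation*}
is asserted with a plan (``partial fractions plus telescoping cotangents'') but never executed, and without it the closed form \eqref{eq:MF(l)} --- which is the content of the theorem --- is not established. The paper proves it by writing the sum, via Cramer's rule, as a ratio of two Vandermonde determinants (\eqref{eq:sum.omega}), applying \eqref{eq:det.Vander} to reduce that ratio to the single product $\prod_{p\ne n-1-k}\frac{\omega_{n+j}-\omega_p}{\omega_{n-1-k}-\omega_p}$, converting each factor with $\omega_q-\omega_p=2i\eps^{p+q}\sin((q-p)\alpha)$, and finally collapsing the resulting quotient of sine/cosine products by the identity \eqref{eq:trig}. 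Your partial-fraction route is essentially the Lagrange-interpolation form of the same Vandermonde-inverse computation, so it would work, but the nontrivial trigonometric collapse (in particular the analogue of \eqref{eq:trig}, which is also what gives $C_j=C_{n-1-j}$ and the symmetry of the answer) still has to be carried out; as written the proof of \eqref{eq:MF(l)} is missing.

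A smaller point: with your ordering of the boundary maps ($\Gamma_0$ starting at $y(0)$, $\Gamma_1$ starting at $y^{(2n-1)}(0)$) the product $N_1N_0^{-1}$ places $\mu^{2n-1-j-k}$, not $\mu^{j+k+1}$, in position $(j,k)$; this is the matrix \eqref{eq:MF(l)} with both indices reversed, i.e.\ the same Weyl function up to a permutation of $\bC^n$. To land literally on \eqref{eq:MF(l)} you need the paper's ordering \eqref{eq:G0}--\eqref{eq:G1}, with $\Gamma_0$ listing the derivatives in decreasing order. You should also fix the signs $\eps_j$ explicitly from Green's formula rather than leaving them undetermined, since they enter the cancellation that makes $\Im M_F(t+i0)$ positive.
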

%
%
\begin{theorem} \label{th:Krein}
The Krein extension $A_K$ of the operator $A$ is defined by the
boundary conditions
\begin{equation} \label{eq:Krein.cond}
    y^{(n)}(0) = y^{(n+1)}(0) = \ldots = y^{(2n-1)} = 0.
\end{equation}
Its characteristic matrix is
\begin{equation} \label{eq:MK(l)}
    M_K(\lambda) = \left(  \frac{-C_j \cdot C_k}{\sin((j+k+1)\alpha)}
    \cdot \left(\frac{-1}{\sqrt[2n]{-\lambda}}\right)^{j+k+1} \right)_{j,k=0}^{n-1}, \quad \Im \lambda > 0.
\end{equation}
The corresponding spectral function is
\begin{eqnarray}
    \label{eq:sigma.K(t>0)}
        \sigma_K(t) &=& \frac{2n}{\pi}\left(  (-1)^{j+k}\frac{C_j \cdot C_k}{2n-1-j-k}
        \cdot t^{\frac{2n-1-j-k}{2n}}\right)_{j,k=0}^{n-1}, \quad t \geqslant 0, \\
    \label{eq:sigma.K(t<0)}
        \sigma_K(t) &=& 0, \quad t < 0.
\end{eqnarray}
\end{theorem}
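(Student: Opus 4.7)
The proof will rely on Theorem~\ref{th:Fried} and the duality of boundary triplets. The first task is to identify the extension defined by \eqref{eq:Krein.cond} as the Krein extension. Since $A$ is non-negative (integration by parts gives $(Af, f) = \|f^{(n)}\|^2 \geq 0$ on $\dom(A)$), its Krein extension exists. Take the boundary triplet $\Pi = \{\cH, \Gamma_0, \Gamma_1\}$ underlying Theorem~\ref{th:Fried}, with $\ker \Gamma_0 = \dom(A_F)$ and Weyl function $M_F$. A standard criterion from the theory of boundary triplets then gives $A_K = A^* \upharpoonright \ker(\Gamma_1 - M_F(-0)\Gamma_0)$, where $M_F(-0) := \lim_{\lambda \uparrow 0} M_F(\lambda)$ in the strong sense. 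By \eqref{eq:MF(l)}, every entry of $M_F(\lambda)$ contains the factor $(\sqrt[2n]{-\lambda})^{j+k+1}$ with $j+k+1 \geq 1$, whence $M_F(-0) = 0$ and $A_K = A^* \upharpoonright \ker \Gamma_1$. Green's formula for $(-1)^n y^{(2n)}$ yields $(\Gamma_1 y)_j = (-1)^{n+1+j} y^{(2n-1-j)}(0)$, an invertible linear combination of $y^{(n)}(0), \ldots, y^{(2n-1)}(0)$, so $\Gamma_1 y = 0$ is equivalent to \eqref{eq:Krein.cond}.

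Next, consider the transformed boundary triplet $\Pi' = \{\cH, -\Gamma_1, \Gamma_0\}$. In this triplet the distinguished extension $A_0' := A^* \upharpoonright \ker(-\Gamma_1)$ equals $A_K$, and by the standard duality of boundary triplets its Weyl function is $-M_F(\lambda)^{-1}$. Hence $M_K(\lambda) = -M_F(\lambda)^{-1}$, and it remains to identify this with \eqref{eq:MK(l)}. I verify this by checking $M_F(\lambda) M_K(\lambda) = -I$ directly: setting $\mu := \sqrt[2n]{-\lambda}$, the $(j,l)$-entry of the product works out to $C_j C_l (-1)^{l+1} \mu^{j-l} S_{jl}$, where
\begin{equation*}
    S_{jl} := \sum_{k=0}^{n-1} \frac{(-1)^k C_k^2}{\sin((j+k+1)\alpha)\sin((k+l+1)\alpha)}.
\end{equation*}
The identity $M_F M_K = -I$ thereby reduces to the trigonometric identity $S_{jl} = (-1)^j C_j^{-2} \delta_{jl}$ for $0 \leq j, l \leq n-1$. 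This identity is the core difficulty. My approach would be a partial-fraction decomposition argument, interpreting both sides as residues of a suitable meromorphic function whose simple poles involve $\sin^2((p+1)\alpha)$ for $p = 0, \ldots, n-1$; alternatively, one can compute $M_K$ directly from $\Pi'$ by parametrizing $\fN_\lambda$ via the exponentials $e^{z x}$ associated with the roots $z^{2n} = (-1)^{n+1}\mu^{2n}$ having $\Re z < 0$, and invoking Vandermonde determinants as in Theorem~\ref{th:Fried}.

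Finally, the spectral function $\sigma_K$ follows from the Stieltjes--Perron inversion formula $\sigma_K(t_2) - \sigma_K(t_1) = \pi^{-1} \int_{t_1}^{t_2} \Im M_K(s+i0) \, ds$ at continuity points. For $s > 0$, the branch \eqref{eq:sqrt(-l)} gives $\sqrt[2n]{-(s+i0)} = s^{1/(2n)} e^{-i\alpha}$, and substitution into \eqref{eq:MK(l)} yields $\Im (M_K(s+i0))_{jk} = (-1)^{j+k} C_j C_k s^{-(j+k+1)/(2n)}$, since the denominator $\sin((j+k+1)\alpha)$ cancels the imaginary part of $e^{i(j+k+1)\alpha}$; integration from $0$ to $t$ produces \eqref{eq:sigma.K(t>0)}. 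For $s < 0$, the same branch makes $\sqrt[2n]{-(s+i0)}$ real positive, so $M_K(s+i0)$ is real, $\sigma_K$ is constant on $(-\infty, 0)$, and \eqref{eq:sigma.K(t<0)} follows from the normalization $\sigma_K(-\infty) = 0$. The main obstacle of the proof is thus the trigonometric identity $S_{jl} = (-1)^j C_j^{-2} \delta_{jl}$; all other steps are direct invocations of boundary-triplet theory or routine calculation.
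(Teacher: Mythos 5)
Your overall architecture agrees with the paper's: the identification $\dom(A_K)=\ker(\Gamma_1-M_F(-0)\Gamma_0)=\ker\Gamma_1$, the passage to the transposed triplet giving $M_K=-M_F^{-1}$, and the Stieltjes inversion step (where your computation of $\Im M_K(s+i0)$ and the integration are correct, modulo the routine justification of the dominated-convergence argument near the integrable singularity at $s=0$, which the paper handles by the bound $|[M_K(x+iy)]_{jk}|\leqslant C(|x|^{-1+\frac{1}{2n}}+|x|^{-\frac{1}{2n}})$). However, there is a genuine gap at the central step: you never actually establish the explicit formula~\eqref{eq:MK(l)}. Your reduction of $M_FM_K=-I$ to the identity $S_{jl}=(-1)^jC_j^{-2}\delta_{jl}$ is algebraically correct, but this is exactly the ``curious identity'' that the paper \emph{derives as a corollary} of formulas~\eqref{eq:MF(l)},~\eqref{eq:MK(l)} and~\eqref{eq:MK=-MF-1}, with the explicit remark that ``it seems non-trivial to prove it directly.'' Your proposed residue/partial-fraction argument is only a sketch of an idea, and your fallback (redoing the Vandermonde cofactor computation for the triplet $\Pi'$, which now requires inverting $N_1$ rather than $N_0$) is likewise not carried out. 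As it stands the proof of~\eqref{eq:MK(l)} is missing.

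The paper circumvents this difficulty by a purely algebraic conjugation trick worth knowing: writing $N_0(\lambda)=D_0(\lambda)V$ and $N_1(\lambda)=D_1(\lambda)RVD$ with $D_0,D_1$ diagonal, $R=\codiag(1,\ldots,1)$ and $D=\diag((-1)^k)_{k=0}^{n-1}$, and using $R=R^{-1}$, $D=D^{-1}$, hence $(VDV^{-1})^{-1}=VDV^{-1}$, one eliminates $VDV^{-1}$ between the resulting expressions for $M_F$ and $-M_F^{-1}$ to obtain
\begin{equation*}
    M_K(\lambda)=-\bigl(D_0RD_1^{-1}\bigr)\,M_F(\lambda)\,\bigl(D_0RD_1^{-1}\bigr),
    \qquad D_0RD_1^{-1}=\rho^{-n}\codiag\bigl((-1)^{n-1-j}\bigr)_{j=0}^{n-1}.
\end{equation*}
Conjugation by an antidiagonal matrix merely reverses indices $j\mapsto n-1-j$, $k\mapsto n-1-k$ and multiplies by explicit scalars, so the known formula~\eqref{eq:MF(l)} transforms directly into~\eqref{eq:MK(l)} once one observes $C_j=C_{n-1-j}$ (a consequence of~\eqref{eq:trig}). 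No new trigonometric identity is needed; if you wish to keep your structure, you must either supply a complete proof of $S_{jl}=(-1)^jC_j^{-2}\delta_{jl}$ or replace that step by an argument of this kind.
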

%
%
%
%
\section{Preliminaries} \label{sec:prelim}
%
%
\subsection{$R$-functions} \label{subsec:R-func}
%
%
Let $F(z)$ be $n \times n$ matrix-function defined in $\bC_+ :=
\{\lambda : \Im \lambda > 0\}$. It is called $R$-function (or
Nevanlinna function) if it is holomorphic in $\bC_+$ and $\Im
F(z) \geqslant 0$, $z \in \bC_+$. Each $R$-function admits the
following integral representation
\begin{equation} \label{eq:F(z)=A+zB+...}
    F(z) = A + z B + \int_{-\infty}^{+\infty}\left(\frac{1}{t-z}-\frac{t}{1+t^2}\right) d \sigma(t), \quad z \in \bC_+,
\end{equation}
where $A, B \in \bC^{n \times n}$ are selfadjoint matrices, $B
\geqslant 0$ and $\sigma(t)$ is non-decreasing left-continuous
selfadjoint $n \times n$ matrix-function such that the matrix
integral
\begin{equation}
    \int_{-\infty}^{+\infty} \frac{d \sigma(t)}{1+t^2}
\end{equation}
converges. The matrix-function $\sigma(\cdot)$ is called the
spectral function of $F(\cdot)$. Note that the spectral function
$\sigma(\cdot)$ of $F(\cdot)$ can be obtained by the Stieltjes
inversion formula:
\begin{equation} \label{eq:Stielt}
    \frac{1}{2}(\sigma(t+0) + \sigma(t)) - \frac{1}{2}(\sigma(s+0) + \sigma(s))
    = \frac{1}{\pi} \lim_{y \downarrow 0} \int^t_s \Im (F(x + iy)) dx, \quad s, t \in \bR.
\end{equation}
%
%
\subsection{Boundary triplets and Weyl functions} \label{subsec:BT+Weyl}
%
%
Let $A$ be a closed symmetric operator in a Hilbert space $\fH$
with equal deficiency indices $n_+(A) = n_-(A)$.
%
%
\begin{definition} \label{def:bound.triplet} \emph{(\cite{Gorb91})}
A triplet $\Pi = \{ \cH, \Gamma_0, \Gamma_1\}$ consisting of an
auxiliary Hilbert space $\cH$ and linear mappings
\begin{equation} \label{eq:Gamma.j}
    \Gamma_j: \dom(A^*) \longrightarrow \cH, \quad j \in \{0,1\},
\end{equation}
is called a boundary triplet for the adjoint operator $A^*$ of
$A$ if the following two conditions are satisfied:

(i) The second Green's formula
\begin{equation} \label{eq:Green.form}
    (A^*f,g) - (f,A^*g) = (\Gamma_1 f, \Gamma_0 g) - (\Gamma_0 f, \Gamma_1 g), \quad f,g \in \dom(A^*),
\end{equation}
takes place and

(ii) the mapping
\begin{equation} \label{eq:Gamma.def}
    \Gamma : \dom(A^*) \longrightarrow \cH \oplus \cH,
    \quad \Gamma f := \{\Gamma_0 f, \Gamma_1 f\},
\end{equation}
is surjective.
\end{definition}
%
%
It is easily seen that for each self-adjoint extension $\wt{A}$
of $A$ there exists (non-unique) boundary triplet $\Pi =
\{\cH,\Gamma_0,\Gamma_1\}$ such that
\begin{equation*}
    \dom(\wt{A}) = \ker(\Gamma_0).
\end{equation*}
We say in this case that the triplet $\Pi$ corresponds to
$\wt{A}$.
%
%
\begin{definition}\emph{(\cite{DerMal91,DerMal92}) \label{def:Weyl}}
Let $\{\cH,\Gamma_0,\Gamma_1\}$ be a boundary triplet for the
operator $A^*$ and $A_0 := A^* \upharpoonright \ker (\Gamma_0)$.
The Weyl function of $A$ corresponding to the boundary triplet
$\{\cH, \Gamma_0,\Gamma_1\}$ is the unique mapping $M(\cdot):
\rho(A_0) \longrightarrow [\cH]$ satisfying
\begin{equation} \label{eq:M(z).def}
    \Gamma_1 f_z = M(z) \Gamma_0 f_z, \quad f_z \in \fN_z := \ker(A^* - zI),
    \quad z \in \rho(A_0).
\end{equation}
\end{definition}
%
%
It is well known (see~\cite{DerMal91}) that the above implicit
definition of the Weyl function is correct and the Weyl function
$M(\cdot)$ is a $R$-function obeying $0\in \rho(\Im(M(i)))$.
Therefore, if $\dim \cH < \infty$, it admits integral
representation~\eqref{eq:F(z)=A+zB+...}, where $\sigma_M(\cdot)$
can be found by~\eqref{eq:Stielt}.
%
%
\section{Proofs of the main results} \label{sec:proofs}
%
%
\begin{lemma} \label{lem:Nl}
Let $\Im \lambda > 0$ and $\lambda = r e^{i \varphi}$, $0 <
\varphi < \pi$. Then
\begin{equation}
    \fN_{\lambda} = \Span\{y_k(\cdot,\lambda)\}_{k=0}^{n-1},
    \quad y_k(x,\lambda) := e^{\omega_k \rho x},
\end{equation}
where $\rho := i \sqrt[2n]{\lambda} := \sqrt[2n]{r} \cdot
e^{\frac{(\pi n + \varphi)i}{2n}}$ and $\omega_k := e^{\frac{i
\pi k}{n}}$.
\end{lemma}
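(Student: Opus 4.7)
\medskip
\noindent\textbf{Proof plan.}
The plan is to compute $\fN_\lambda = \ker(A^* - \lambda I)$ directly, using the fact that $A^*$ acts as the differential expression $l$ on its natural maximal domain, so that $\fN_\lambda$ consists precisely of the $L^2(0,\infty)$-solutions of the ODE
\begin{equation*}
    (-1)^n y^{(2n)}(x) = \lambda\, y(x), \qquad x > 0.
\end{equation*}
This is a linear ODE with constant coefficients, whose characteristic equation is $\mu^{2n} = (-1)^n \lambda$. Writing $\lambda = r e^{i\varphi}$ with $r > 0$ and $\varphi \in (0,\pi)$, the $2n$ roots are $\mu_k = \omega_k \rho$, $k = 0, 1, \ldots, 2n-1$, where $\omega_k = e^{i\pi k/n}$ are the $2n$-th roots of unity and $\rho = \sqrt[2n]{r}\,e^{i(\pi n + \varphi)/(2n)}$. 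A direct check gives $\rho^{2n} = r e^{i(\pi n + \varphi)} = (-1)^n \lambda$, so each $\mu_k$ satisfies the characteristic equation, and hence $\{e^{\omega_k \rho\, x}\}_{k=0}^{2n-1}$ is a fundamental system of solutions.

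Next I would sort these $2n$ exponentials by decay at $+\infty$. The exponent $e^{\omega_k \rho x}$ lies in $L^2(0,\infty)$ if and only if $\Re(\omega_k \rho) < 0$. The argument of $\omega_k \rho$ is
\begin{equation*}
    \theta_k := \frac{\pi}{2} + \frac{\varphi}{2n} + \frac{\pi k}{n}.
\end{equation*}
For $\varphi \in (0,\pi)$ and $k \in \{0,1,\ldots,n-1\}$, a quick interval check shows $\theta_k \in (\pi/2,\,3\pi/2)$, whence $\cos\theta_k < 0$ and the corresponding exponential is in $L^2(0,\infty)$. For $k \in \{n, n+1, \ldots, 2n-1\}$ one has $\omega_{k} = -\omega_{k-n}$, so the real part flips sign and the solution grows. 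This produces exactly $n$ linearly independent $L^2$-solutions $y_k(x,\lambda) = e^{\omega_k \rho x}$ for $k = 0, \ldots, n-1$.

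Finally, since the $y_k(\cdot,\lambda)$ are $n$ linearly independent elements of $\fN_\lambda$ (linear independence follows from $\omega_k$ being distinct and Vandermonde, or from their distinct exponential growth rates), and since the deficiency index of $A$ is $n_\pm(A) = n$, these exhaust $\fN_\lambda$. This yields the claimed equality $\fN_\lambda = \Span\{y_k(\cdot,\lambda)\}_{k=0}^{n-1}$.

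\medskip
\noindent\textbf{Main obstacle.} The only nontrivial step is the case analysis on $\theta_k$: making sure that the chosen branch $\rho = i\sqrt[2n]{\lambda}$ with $\arg \rho \in (\pi/2,\pi/2 + \pi/(2n))$, combined with the rotations by $\omega_k$, really puts all $n$ arguments $\theta_0,\ldots,\theta_{n-1}$ strictly inside the open left half-plane for every $\varphi \in (0,\pi)$, so that none of the $L^2$-solutions degenerates into a bounded oscillating mode. The rest is the standard constant-coefficient ODE calculation and a dimension count against the known deficiency index.
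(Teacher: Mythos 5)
Your proof is correct and follows essentially the same route as the paper: identify $\{e^{\omega_k\rho x}\}_{k=0}^{2n-1}$ as a fundamental system for $(-1)^n y^{(2n)}=\lambda y$, check that $\Re(\omega_k\rho)=\sqrt[2n]{r}\cos\bigl(\tfrac{\pi}{2}+\tfrac{\varphi}{2n}+\tfrac{\pi k}{n}\bigr)<0$ for $k\in\{0,\ldots,n-1\}$, and conclude by the dimension count $\dim\fN_\lambda=n$. Your interval check $\theta_k\in(\pi/2,3\pi/2)$ is exactly the verification the paper leaves implicit.
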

%
%
\begin{proof}
The system $\{y_k(\cdot,\lambda)\}_{k=0}^{2n-1}$ forms a
fundamental system of solutions of equation $(-1)^n y^{(2n)} =
\lambda y$ for $\lambda \ne 0$. For $k \in \{0,1,\ldots,n-1\}$
we have
\begin{equation}
    \Re (\omega_k \rho) = \sqrt[2n]{r} \cos\left(\frac{\pi}{2} + \frac{\varphi}{2n} + \frac{\pi k}{n}\right) < 0.
\end{equation}
Hence $y_k(\cdot,\lambda) \in \fN_{\lambda}$, $k \in
\{0,1,\ldots,n-1\}$. Since $\dim \fN_{\lambda} = n$, we are
done.
\end{proof}
Let $x_0,\ldots,x_{n-1} \in \bC$. Put
\begin{equation}
    \Vand(x_0,\ldots,x_{n-1}) := (x_k^{n-1-j})_{j,k=0}^{n-1} = \begin{pmatrix}
        x_0^{n-1} & x_1^{n-1} & \cdots & x_{n-1}^{n-1} \\
        \vdots & \vdots & \cdots & \vdots\\
        x_0 & x_1 & \cdots & x_{n-1} \\
        1 & 1 & \cdots & 1
    \end{pmatrix}.
\end{equation}
The determinant of this matrix coincides with the Vandermond
determinant:
\begin{equation} \label{eq:det.Vander}
    \det(\Vand(x_0,\ldots,x_{n-1})) = \det\left((x_k^{n-1-j})_{j,k=0}^{n-1}\right) = \prod_{0 \leqslant j < k < n} (x_j - x_k).
\end{equation}
Next put
\begin{equation}
    \codiag(x_0,x_1,\ldots,x_n) = \codiag(x_j)_{j=0}^{n-1} = \begin{pmatrix}
        0 & 0 & \cdots & 0 & x_0 \\
        0 & 0 & \cdots & x_1 & 0 \\
        \vdots & \vdots & \cdots & \vdots & \vdots \\
        0 & x_{n-2} & \cdots & 0 & 0 \\
        x_{n-1} & 0 & \cdots & 0 & 0
    \end{pmatrix}.
\end{equation}
It is clear that
\begin{eqnarray}
    \label{eq:codiag.A}
    \codiag(x_j)_{j=0}^{n-1} \cdot (a_{j,k})_{j,k=0}^{n-1} &=& (x_j a_{n-1-j,k})_{j,k=0}^{n-1}, \\
    \label{eq:A.codiag}
    (a_{j,k})_{j,k=0}^{n-1} \cdot \codiag(x_k)_{k=0}^{n-1} &=& (a_{j,n-1-k} x_{n-1-k})_{j,k=0}^{n-1}.
\end{eqnarray}
\begin{proof}[Proof of Theorem~\ref{th:Fried}]
The triplet $\Pi = \{\bC^n, \Gamma_0, \Gamma_1\}$ with
\begin{eqnarray}
    \label{eq:G0} \Gamma_0 y &:=& \col(y^{(n-1)}(0), \ldots, y'(0), y(0)),\\
    \label{eq:G1} \Gamma_1 y &:=& \col(y^{(n)}(0), -y^{(n+1)}(0), \ldots, (-1)^{n-1} y^{(2n-1)}(0)),
\end{eqnarray}
is a boundary triplet for the adjoint operator $A^*$
(see~\cite{DerMal91}). Clearly it corresponds to $A_F$. Hence
the characteristic matrix of $A_F$ coincides with the Weyl
function $M_F(\lambda)$ of $A$ that corresponds to the triplet
$\Pi$.

It follows from $y^{(j)}_k(0,\lambda) = (\rho \cdot \omega_k)^j$
that
\begin{eqnarray}
    \label{eq:N0(l)}
    N_0(\lambda) &:=& \begin{pmatrix} \Gamma_0 y_0 & \ldots & \Gamma_0 y_{n-1}\end{pmatrix}
    = \left((\rho \cdot \omega_k)^{n-1-j}\right)_{j,k=0}^{n-1}. \\
    \label{eq:N1(l)}
    N_1(\lambda) &:=& \begin{pmatrix} \Gamma_1 y_0 & \ldots & \Gamma_1 y_{n-1}\end{pmatrix}
    = \left((-1)^{j}(\rho \cdot \omega_k)^{n+j}\right)_{j,k=0}^{n-1}.
\end{eqnarray}
Put
\begin{equation} \label{eq:V.def}
    V := \left(v_{jk}\right)_{j,k=0}^{n-1} := (\omega_k^{n-1-j})_{j,k=0}^{n-1}
    = \Vand(\omega_0,\ldots,\omega_{n-1}).
\end{equation}
Since numbers $\omega_0,\ldots,\omega_{n-1}$ are distinct, it
follows from~\eqref{eq:det.Vander} that $V$ is non-singular
matrix. Put $V^{-1} =: (\wt{v}_{jk})_{j,k=0}^{n-1}$. Then by
Lemma~\ref{lem:Nl}, for the Weyl function $M_F(\lambda)$ we have
\begin{eqnarray} \label{eq:M=N1.N0}
    M_F(\lambda) = N_1(\lambda) \cdot N_0^{-1}(\lambda) &=& \Bigl((-1)^{j}(\rho \cdot \omega_p)^{n+j}\Bigr)_{j,p=0}^{n-1}
    \cdot \Bigl(\rho^{k+1-n} \cdot \wt{v}_{pk}\Bigr)_{p,k=0}^{n-1} \nonumber \\
    &=& \Bigl((-1)^{j} \rho^{j+k+1} \sum_{p=0}^{n-1}
    \omega_p^{n+j} \cdot \wt{v}_{pk}\Bigr)_{j,k=0}^{n-1}.
\end{eqnarray}
Let $V_{jk}$ be the cofactor of the element $v_{jk}$ of the
matrix $V$. Combining Cramer's rule with the expansion of the
determinant according to the $k$-th row yields
\begin{equation} \label{eq:sum.omega}
    \sum_{p=0}^{n-1} \omega_p^{n+j} \cdot \wt{v}_{pk}
    = \frac{1}{\det(V)}\sum_{p=0}^{n-1} \omega_p^{n+j} V_{kp}
    = \frac{\det\left(V^{(k)}_{j}\right)}{\det(V)},
\end{equation}
where the matrix $V_{j}^{(k)}$ is obtained from the matrix $V$
by replacing the row $(\omega_p^{n-1-k})_{p=0}^{n-1}$ by the row
$(\omega_p^{n+j})_{p=0}^{n-1}$. Since $\omega_p^q = e^{\frac{\pi
i p q}{n}} = \omega_q^p$, then $V_{j}^{(k)}$ is symmetric to the
matrix $\Vand(\omega_0,\ldots,\omega_{n-k-2}, \omega_{n+j},
\omega_{n-k}, \ldots, \omega_{n-1})$ with respect to the
off-diagonal. Hence
\begin{equation} \label{eq:det.Vjk}
    \det\left(V_{j}^{(k)}\right) =
    \det\left(\Vand(\omega_0,\ldots,\omega_{n-k-2}, \omega_{n+j}, \omega_{n-k}, \ldots, \omega_{n-1})\right).
\end{equation}
Combining~\eqref{eq:det.Vander} with~\eqref{eq:det.Vjk} yields
\begin{equation*}
    \frac{\det\left(V^{(k)}_{j}\right)}{\det(V)}
    = \prod_{p=0 \atop{p \ne n-1-k}}^{n-1} \frac{\omega_{n+j} - \omega_p}{\omega_{n-1-k} - \omega_p}.
\end{equation*}
Since $\omega_q - \omega_p = 2i \eps^{p+q} \sin((q-p)\alpha)$,
where $\alpha = \frac{\pi}{2n}$ and $\eps = e^{i \alpha}$, then
\begin{eqnarray} \label{eq:det.Vjk/det.V}
    \frac{\det\left(V^{(k)}_{j}\right)}{\det(V)}
    &=& \eps^{(j+k+1)(n-1)} \cdot \prod_{p=0 \atop{p \ne n-1-k}}^{n-1}
    \frac{\sin((n+j-p)\alpha)}{\sin((n-1-k-p)\alpha)} \nonumber \\
    &=& \frac{\eps^{(j+k+1)(n-1)}}{\sin((j+k+1)\alpha)} \cdot
    \frac{\prod_{p=0}^{n-1} \cos((j-p)\alpha)}{\prod_{p=1}^{n-1-k} \sin p\alpha
    \cdot \prod_{p=1}^k (-\sin p\alpha)} \nonumber \\
    &=& \frac{(-1)^k \eps^{(j+k+1)(n-1)}}{\sin((j+k+1)\alpha)} \cdot
    \frac{\prod_{p=1}^j \cos p\alpha \cdot \prod_{p=1}^{n-1-j} \cos p\alpha}
    {\prod_{p=1}^k \sin p\alpha \cdot
    \prod_{p=1}^{n-1-k} \sin p\alpha} \nonumber \\
    &=& \frac{(-1)^k \eps^{(j+k+1)(n-1)}}{\sin((j+k+1)\alpha)} \cdot
    \prod_{p=1}^{k} \ctg p\alpha \cdot \prod_{p=1}^{j} \ctg p\alpha.
\end{eqnarray}
The last step is implied by the identity
\begin{equation} \label{eq:trig}
    \prod_{p=1}^j \cos p\alpha \cdot \prod_{p=1}^{n-1-j} \sin p\alpha
    = \prod_{p=1}^{n-1} \cos p\alpha = \prod_{p=1}^{n-1} \sin p\alpha,
    \qquad j \in \{0,1,\ldots,n-1\}.
\end{equation}
Inserting
formulas~\eqref{eq:sum.omega},~\eqref{eq:det.Vjk/det.V}
into~\eqref{eq:M=N1.N0} and taking into account the identity
$-\eps^{n-1} \rho = \sqrt[2n]{r} \cdot e^{\frac{i (\varphi -
\pi)}{2n}}$ we get the desired formula~\eqref{eq:MF(l)} for
$M_F(\lambda)$.

Now let's prove
formulas~\eqref{eq:sigma.F(t>0)}--\eqref{eq:sigma.F(t<0)}. Since
$M_F(\lambda)$ is continuous function of $\lambda$ in the closed
upper halfplane, Stieltjes inversion formula~\eqref{eq:Stielt}
and Lebesque limit theorem yields
\begin{equation} \label{eq:StieltF}
    \sigma_F(t) = \frac{1}{\pi} \int_0^t \Im \left( \lim_{y \downarrow 0} M_F(x+iy) \right) dx,
    \quad t \in \bR.
\end{equation}
Note that if $\lambda = x + iy$ with $x \in \bR$, $y > 0$,
then~\eqref{eq:sqrt(-l)} implies
\begin{equation} \label{eq:lim.sqn.-l}
    \lim_{y \downarrow 0} \sqrt[2n]{-\lambda} = \begin{cases}
        \sqrt[2n]{x} \cdot e^{-i \alpha},& \quad x \geqslant 0, \\
        \sqrt[2n]{-x},& \quad x < 0.
    \end{cases}
\end{equation}
Combining~\eqref{eq:MF(l)} with~\eqref{eq:lim.sqn.-l} yields
\begin{equation} \label{eq:lim.MF(l)}
    \lim_{y \downarrow 0} M_F(x + iy) = \begin{cases}
        \left(  -C_j \cdot C_k \cdot x^{\frac{j+k+1}{2n}} \cdot \frac{e^{-i (j+k+1) \alpha}}{\sin((j+k+1)\alpha)} \right)_{j,k=0}^{n-1}, & \quad x \geqslant 0, \\
        \left(  -C_j \cdot C_k \cdot (-x)^{\frac{j+k+1}{2n}} \cdot \frac{1}{\sin((j+k+1)\alpha)} \right)_{j,k=0}^{n-1}, & \quad x < 0.
    \end{cases}
\end{equation}
Hence
\begin{equation} \label{eq:Im.lim.MF(l)}
    \Im \left( \lim_{y \downarrow 0} M_F(x + iy) \right) = \begin{cases}
        \left(  C_j \cdot C_k \cdot x^{\frac{j+k+1}{2n}} \right)_{j,k=0}^{n-1}, & \quad x \geqslant 0, \\
        0, & \quad x < 0.
    \end{cases}
\end{equation}
Combining~\eqref{eq:StieltF} with~\eqref{eq:Im.lim.MF(l)}
yields~\eqref{eq:sigma.F(t>0)}--\eqref{eq:sigma.F(t<0)}.
\end{proof}
%
%
\begin{remark} \label{rem:const}
Calculation similar
to~\eqref{eq:M=N1.N0}--\eqref{eq:det.Vjk/det.V} was made in the
proof of Theorem 1 and Corollary 1 in~\cite{LunOri09} in
connection with sharp constants in inequalities for intermediate
derivatives. Moreover, it is curious to note that these
constants are connected with diagonal entries of the Weyl
functions $M_F(\lambda)$ and $M_K(\lambda)$. Namely, if
$A_{n,j}$, $j \in \{0,1,\ldots,n-1\}$, is the sharp constant in
the following inequality
\begin{equation}
    |f^{j}(0)| \leqslant A_{n,j} \cdot \left(\|f\|_2^2 + \|f^{(n)}\|_2^2\right)^{1/2}, \quad f \in W^{n,2}[0,\infty),
\end{equation}
then formula (1.4) from~\cite{LunOri09} and
formulas~\eqref{eq:lim.MF(l)},~\eqref{eq:lim.MK(l)} imply
\begin{equation}
    A_{n,j}^2 = \left[M_K(-1)\right]_{jj} = -\left[M_F(-1)\right]_{jj}.
\end{equation}
\end{remark}
%
%
\begin{remark} \label{rem:V-1}
Formula~\eqref{eq:MF(l)} could be also proved using explicit
formula for the inverse matrix $V^{-1}$ from~\cite{Lun07} and
some auxiliary trigonometric identity from~\cite{Lun07}. But
this way is quite cumbersome.
\end{remark}
%
%
\begin{example}
For $n=1$ the Weyl function $M_F(\lambda)$ and its spectral
function $\sigma_F(t)$ are well-known
$($see~\cite[\S132]{AkhGla81}, \cite{Nai69}$)$ and given by
\begin{equation}
    M_F(\lambda) = i \sqrt{\lambda}, \qquad \sigma_F(t) = \frac{2}{3\pi} t^{3/2}, \quad t>0,
\end{equation}
which coincides with
formulas~\eqref{eq:MF(l)},~\eqref{eq:sigma.F(t>0)} for $n=1$.
For $n=2$ these formulas turn into
\begin{equation}
    M_F(\lambda) = \begin{pmatrix}
        (i-1) \lambda^{1/4} & i \lambda^{1/2} \\
        i \lambda^{1/2} & (i+1) \lambda^{3/4}
    \end{pmatrix}, \qquad \sigma_F(t) = \frac{1}{\pi} \begin{pmatrix}
        \frac45 t^{5/4} & \frac23 t^{3/2} \\
        \frac23 t^{3/2} & \frac47 t^{7/4}
    \end{pmatrix}, \quad t>0,
\end{equation}
while for $n=3$ we have
\begin{eqnarray}
    M_F(\lambda) &=& \begin{pmatrix}
     \left(i-\sqrt{3}\right) \lambda^{1/6} & \left(-1+i \sqrt{3}\right) \lambda^{1/3} & i \lambda^{1/2} \\
     \left(-1+i \sqrt{3}\right) \lambda^{1/3} & 3 i \lambda^{1/2} & \left(1+i \sqrt{3}\right) \lambda^{2/3} \\
     i \lambda^{1/2} & \left(1+i \sqrt{3}\right) \lambda^{2/3} & \left(i+\sqrt{3}\right) \lambda^{5/6}
    \end{pmatrix}, \\
    \sigma_F(t) &=& \frac{1}{\pi}\begin{pmatrix}
        \frac67 t^{7/6} & \frac{3\sqrt{3}}{4} t^{4/3} & \frac23 t^{3/2} \\
        \frac{3\sqrt{3}}{4} t^{4/3} & 2 t^{3/2} & \frac{3\sqrt{3}}{5} t^{5/3} \\
        \frac23 t^{3/2} & \frac{3\sqrt{3}}{5} t^{5/3} & \frac{6}{11} t^{11/6}
    \end{pmatrix}, \quad t>0.
\end{eqnarray}
\end{example}
%
%
\begin{proof}[Proof of Theorem~\ref{th:Krein}]
%
By~\cite[Proposition 5]{DerMal91}, $\dom (A_K) = \ker(\Gamma_1 -
M_F(0) \Gamma_0)$, where $M_F(0) = s\text{-}\lim_{x \uparrow 0}
M_F(x)$ and $\Gamma_0$, $\Gamma_1$ are given
by~\eqref{eq:G0}--\eqref{eq:G1}. In view of~\eqref{eq:MF(l)},
$M_F(0) = 0$. Hence $\dom (A_K) = \ker(\Gamma_1)$ and the
boundary triplet $\Pi' := \{\bC^n, \Gamma_0', \Gamma_1'\} :=
\{\bC^n, \Gamma_1, -\Gamma_0\}$ corresponds to $A_K$. Definition
of $\Gamma_1$ (see~\eqref{eq:G1}) implies that $A_K$ is defined
by the boundary conditions~\eqref{eq:Krein.cond}. Also note that
\begin{equation} \label{eq:MK=-MF-1}
    M_K(\lambda) = -N_0(\lambda) N_1^{-1}(\lambda) = -M_F^{-1}(\lambda).
\end{equation}
It follows from~\eqref{eq:N0(l)} and~\eqref{eq:N1(l)} that
\begin{equation} \label{eq:N0.N1}
    N_0(\lambda) = D_0(\lambda) V, \qquad  N_1(\lambda) = D_1(\lambda) \cdot (\omega_k^j)_{j,k=0}^{n-1} \cdot D,
\end{equation}
where
\begin{gather}
    \label{eq:D0,D1.def}
    D_0(\lambda) := \diag(\rho^{n-1-j})_{j=0}^{n-1}, \qquad
    D_1(\lambda) := \diag((-1)^j \rho^{n+j})_{j=0}^{n-1}, \\
    \label{eq:D.def}
    D := \diag(\omega_k^n)_{k=0}^n = \diag((-1)^k)_{k=0}^n.
\end{gather}
Combining~\eqref{eq:codiag.A} with~\eqref{eq:V.def} yields
\begin{equation*}
    (\omega_k^j)_{j,k=0}^{n-1} = R \cdot V, \qquad R = \codiag(1,\ldots,1).
\end{equation*}
Therefore,
\begin{equation} \label{eq:N1=D1RVD}
    N_1(\lambda) = D_1(\lambda) \cdot R \cdot V \cdot D.
\end{equation}
Combining~\eqref{eq:M=N1.N0} with~\eqref{eq:N0.N1}
and~\eqref{eq:N1=D1RVD} and taking into account that $D =
D^{-1}$ and $R = R^{-1}$ we get
\begin{eqnarray}
    \label{eq:MF(l)=VDV}
    M_F(\lambda) =& N_1(\lambda) N_0^{-1}(\lambda) &= D_1(\lambda) R \cdot V D V^{-1} \cdot D_0^{-1}(\lambda), \\
    \label{eq:MK(l)=VDV}
    M_K(\lambda) =& -M_F^{-1}(\lambda) &= - D_0(\lambda) \cdot V D V^{-1} \cdot R D_1^{-1}(\lambda).
\end{eqnarray}
Expressing $VDV^{-1}$ from~\eqref{eq:MF(l)=VDV} and inserting it
to~\eqref{eq:MK(l)=VDV} we arrive at
\begin{equation} \label{eq:M0=-D0RD1...}
    M_K(\lambda) = -D_0(\lambda) R D_1^{-1}(\lambda) \cdot M_F(\lambda) \cdot D_0(\lambda) R D_1^{-1}(\lambda).
\end{equation}
Definition of $D_0(\lambda)$ and $D_1(\lambda)$
(see~\eqref{eq:D0,D1.def}) and
formulas~\eqref{eq:codiag.A}--\eqref{eq:A.codiag} implies
\begin{eqnarray} \label{eq:D0RD1}
    D_0(\lambda) R D_1^{-1}(\lambda) &=& \codiag(\rho^{n-1-j})_{j=0}^{n-1} \cdot \diag((-1)^j \rho^{-n-j})_{j=0}^{n-1} \nonumber \\
    &=& \rho^{-n} \codiag((-1)^{n-1-j})_{j=0}^{n-1}.
\end{eqnarray}
Combining~\eqref{eq:M0=-D0RD1...},~\eqref{eq:D0RD1},~\eqref{eq:codiag.A},~\eqref{eq:A.codiag}
and~\eqref{eq:MF(l)} yields
\begin{eqnarray} \label{eq:MK(l).final}
    M_K(\lambda) &=& \left(\rho^{-2n}(-1)^{n-1-j+k} \frac{C_{n-1-j} \cdot C_{n-1-k}}{\sin((2n-1-j-k)\alpha)}
    \left(\sqrt[2n]{-\lambda}\right)^{2n-1-j-k}\right)_{j,k=0}^{n-1} \nonumber \\
    &=& \left(-\lambda \cdot \frac{i^{-2n}}{\lambda} \cdot (-1)^n \cdot \frac{C_{n-1-j} \cdot C_{n-1-k}}{\sin((j+k+1)\alpha)}
    \left(\frac{-1}{\sqrt[2n]{-\lambda}}\right)^{j+k+1}\right)_{j,k=0}^{n-1}
\end{eqnarray}
It follows from~\eqref{eq:trig} that $C_j = C_{n-1-j}$, $j \in
\{0,1,\ldots,n-1\}$. In view of this,~\eqref{eq:MK(l).final}
implies the desired formula~\eqref{eq:MK(l)} for $M_K(\lambda)$.

Now let's prove
formulas~\eqref{eq:sigma.K(t>0)}--\eqref{eq:sigma.K(t<0)}. It
follows from~\eqref{eq:MK(l)} that for
$j,k\in\{0,1,\ldots,n-1\}$
\begin{equation}
    \bigl|[M_K(x+iy)]_{jk}\bigr| \leqslant C \left(|x|^{-1+\frac{1}{2n}} + |x|^{-\frac{1}{2n}}\right),
    \qquad x \in \bR \setminus \{0\}, \quad y > 0,
\end{equation}
for some $C>0$. Hence Stieltjes inversion
formula~\eqref{eq:Stielt} and Lebesque limit theorem yields
\begin{equation} \label{eq:StieltK}
    \sigma_K(t) = \frac{1}{\pi} \int_0^t \Im \left( \lim_{y \downarrow 0} M_K(x+iy) \right) dx,
    \quad t \in \bR.
\end{equation}
Combining~\eqref{eq:MK(l)} with~\eqref{eq:lim.sqn.-l} we arrive
at
\begin{equation} \label{eq:lim.MK(l)}
    \lim_{y \downarrow 0} M_K(x + iy) = \begin{cases}
        \left(  (-1)^{j+k} \cdot C_j \cdot C_k \cdot x^{-\frac{j+k+1}{2n}}
        \cdot \frac{e^{i (j+k+1) \alpha}}{\sin((j+k+1)\alpha)} \right)_{j,k=0}^{n-1}, & \quad x > 0, \\
        \left(  (-1)^{j+k} \cdot C_j \cdot C_k \cdot (-x)^{-\frac{j+k+1}{2n}}
        \cdot \frac{1}{\sin((j+k+1)\alpha)} \right)_{j,k=0}^{n-1}, & \quad x < 0.
    \end{cases}
\end{equation}
Hence
\begin{equation} \label{eq:Im.lim.MK(l)}
    \Im \left( \lim_{y \downarrow 0} M_K(x + iy) \right) = \begin{cases}
        \left(  (-1)^{j+k} C_j \cdot C_k \cdot x^{-\frac{j+k+1}{2n}} \right)_{j,k=0}^{n-1}, & \quad x > 0, \\
        0, & \quad x < 0.
    \end{cases}
\end{equation}
Combining~\eqref{eq:StieltK} with~\eqref{eq:Im.lim.MK(l)}
yields~\eqref{eq:sigma.K(t>0)}--\eqref{eq:sigma.K(t<0)}.
\end{proof}
%
%
\begin{remark}
Formulas~\eqref{eq:MF(l)},~\eqref{eq:MK(l)}
and~\eqref{eq:MK=-MF-1} lead to the following curious identity
\begin{equation}
    \sum_{p=0}^{n-1} \frac{(-1)^{p+k} \cdot C_j \cdot C_p^2 \cdot C_k}
    {\sin((j+p+1)\alpha) \sin((p+k+1)\alpha)} = \delta_{jk},
    \quad j,k \in \{0,1,\ldots,n-1\}.
\end{equation}
It seems non-trivial to prove it directly.
\end{remark}
%
%

%
%
\end{document}